\numberwithin{equation}{section}
\newtheorem{corollary}[equation]{Corollary}
\newtheorem*{corollary*}{Corollary}
\newtheorem{lemma}[equation]{Lemma}
\newtheorem*{lemma*}{Lemma}
\newtheorem{proposition}[equation]{Proposition}
\newtheorem*{proposition*}{Proposition}
\newtheorem{theorem}[equation]{Theorem}
\newtheorem*{theorem*}{Theorem}
\theoremstyle{remark}
\newtheorem*{assume*}{Assume}
\newtheorem*{definition*}{Definition}
\newtheorem*{example*}{Example}
\newtheorem*{hint*}{Hint}
\newtheorem*{notation*}{Notation}
\newtheorem*{remark*}{Remark}
\numberwithin{HWeq}{section}
\theoremstyle{definition}
\def\a{{\alpha}}
\def\e{\varepsilon}
\def\w{\omega}
\def\bC{\mathbb C}
\def\bO{\mathbb O}
\def\bP{\mathbb P}
\def\bR{\mathbb R}
\def\bS{\mathbb S}
\def\cC{\mathcal C}
\def\cE{\mathcal E}
\def\cF{\mathcal F}
\def\fg{{\mathfrak{g}}}
\def\fh{\mathfrak{h}}
\def\fo{\mathfrak{o}}
\def\fsu{\mathfrak{su}}
\def\tAd{\mathrm{Ad}}
\def\tAnn{\mathrm{Ann}}
\def\tCl{\mathrm{Cl}}
\def\tcodim{\mathrm{codim}}
\def\td{\mathrm{d}}
\def\tEnd{\mathrm{End}}
\def\tGr{\mathrm{Gr}}
\def\ti{\mathrm{i}}
\def\tId{\mathrm{Id}}
\def\tIm{\mathrm{Im}}
\def\tO{\mathrm{O}}
\def\sfP{\mathsf{P}}
\def\tRe{\mathrm{Re}}
\def\tSO{\mathrm{SO}}
\def\tspan{\mathrm{span}}
\def\tSpin{\mathit{Spin}}
\def\tSU{\mathrm{SU}}
\def\tvol{\mathit{vol}}
\def\sfv{\mathsf{v}}
\def\sI{\mathscr{I}}
\def\sP{\mathscr{P}}
\def\sV{\mathscr{V}}
\def\op{\oplus}
\def\ot{\otimes}
\def\dz{\mathrm{d}z}
\def\dbz{\mathrm{d} \bar{z}}
\def\lefthook{\hbox{\small{$\lrcorner\, $}}}
\def\tw{\hbox{\small $\bigwedge$}}
\def\half{\tfrac{1}{2}}
\def\ihalf{\tfrac{\ti}{2}}
\newcounter{cnt}
\begin{document}
\title[Parallel calibrations]{Parallel calibrations and minimal submanifolds}
\author{C. Robles}
\address{Department of Mathematics\\ Mail-stop 3368\\ Texas A\&M University\\ College Station, TX  77843-3368}
\email{robles@math.tamu.edu}
\thanks{I thank the NSF for partial support through DMS - 0805782.}
\date{\today}
\maketitle
\begin{abstract}  Given a parallel calibration $\varphi \in \Omega^p(M)$ on a Riemannian manifold $M$, I prove that the $\varphi$--critical submanifolds with nonzero critical value are minimal submanifolds.  I also show that the $\varphi$--critical submanifolds are precisely the integral manifolds of a $\mathscr{C}^\infty(M)$--linear subspace $\sP \subset \Omega^p(M)$.  In particular, the calibrated submanifolds are necessarily integral submanifolds of the system.  (Examples of parallel calibrations include the special Lagrangian calibration on Calabi-Yau manifolds, (co)associative calibrations on $G_2$--manifolds, and the Cayley calibration on $\tSpin(7)$--manifolds.)  
\end{abstract}

\section{Introduction}\label{S:intro}

\subsection{Calibrated geometry}\label{S:cg}

Let's begin by setting notation and reviewing (briefly) calibrated geometry.  See \cite{HL} for a through introduction.

Let $V$ be a real, $n$-dimensional vector space equipped with an inner product.  Throughout $\{e_1,\ldots,e_n\} \subset V$ will denote a set of orthonormal vectors.  Let
\begin{displaymath}
  \tGr_o(p,V) \  := \ \left\{ e_1 \wedge \cdots \wedge e_p \right\} 
  \ \subset \ \tw^p V
\end{displaymath}
denote the unit decomposable (or simple) $p$-vectors.  Notice that $\tGr_o(p,V)$ is a double cover of the Grassmannian $\tGr(p,V)$ of $p$-planes in $V$.  Given $\xi \in \tGr_o(p,V)$, let $[\xi] \in \tGr(p,V)$ denote the corresponding $p$-plane.  I will abuse terminology by referring to elements of both $\tGr_o(p,V)$ and $\tGr(p,V)$ as $p$-planes.  (Properly, elements of $\tGr_o(p,V)$ are oriented $p$-planes.) 

Let $M$ be an $n$-dimensional Riemannian manifold.  Let $\tGr(p,TM)$ denote the Grassmann bundle of tangent $p$-planes on $M$, and  $\tGr_o(p,TM)$ the double cover of $\tGr(p,TM)$ of decomposable unit $p$-vectors.  Let $\Omega^p(M)$ denote the space of smooth $p$-forms on $M$.

Note that, given a $p$-form $\varphi \in \Omega^p(M)$ and $\xi = e_1\wedge\cdots\wedge e_p \in \tGr_o(p,TM)$, $\varphi(\xi) := \varphi(e_1,\ldots,e_p)$ is well-defined.  If  $\varphi$ is closed and $\varphi \le 1$ on $\tGr_o(p,TM)$, then $\varphi$ is a \emph{calibration}.  The condition that $\varphi \le 1$ on $\tGr_o(p,TM)$ is often expressed as $\varphi_{|\xi} \le \tvol_{|\xi}$.  Assume $\varphi$ is a calibration.  Let 
$$
  \tGr(\varphi) \ := \ \{ \xi \in \tGr_o(p,TM) \ | \ \varphi(\xi) =  1 \}
$$
denote the set of \emph{(oriented) calibrated planes}, and $\tGr(\varphi)_x$ the fibre over $x \in M$.  An oriented $p$-dimensional submanifold $N \subset M$ is \emph{calibrated} if $T_xN \in \tGr(\varphi)_x$, for all $x \in N$.  That is, $\varphi_{|N} = \tvol_N$.  Compact calibrated submanifolds have the property that they are globally volume minimizing in their homology classes \cite{HL}.  The first step in the identification or construction of calibrated submanifolds is the determination of $\tGr(\varphi)$.  However, this is often a difficult problem.

Notice that elements of $\tGr(\varphi)_x$ are critical points of $\varphi_x : \tGr_o(p,T_xM)\to \bR$.  However,
it is not the case that every critical point is an element of $\tGr(\varphi)_x$.  (See \S\ref{S:su2eg} below.)  Let $C(\varphi)_x \subset \tGr_o(p,T_xM)$ denote the set of critical points of $\varphi_x$, and $C(\varphi) \subset \tGr_o(p,TM)$ the associated sub-bundle.  An oriented $p$-dimensional submanifold $N \subset M$ is \emph{$\varphi$--critical} if $T_x N \subset C(\varphi)_x$, for all $x \in N$.  While the calibrated submanifolds are prized as volume minimizers in their homology classes, the $\varphi$--critical submanifolds are also interesting.  Unal showed that if the corresponding critical value is a local maximum, then the $\varphi$--critical submanifold is minimal \cite[Th. 2.1.2]{unal}.  I will prove (Theorem \ref{T:minimal}): if $\varphi$ is parallel, then the $\varphi$--critical submanifolds with nonzero critical value are minimal.  I will also show that the $\varphi$-critical submanifolds are characterized by an exterior differential system $\sP$ (Theorem \ref{T:eds}).

\subsection{Contents}\label{S:contents}

We begin in Section \ref{S:basics} with the simple case of a constant coefficient calibration $\phi \in \tw^pV^*$.  In Proposition \ref{P:cp} I identify the critical points $C(\phi) \supset \tGr(\phi)$ as the annihilator of a linear subspace $\Phi \subset \tw^pV^*$.  In the case that $\phi$ is invariant under a Lie subgroup $H \subset \tO(V)$, $\Phi$ is a $H$-submodule of $\tw^pV^*$ (Lemma \ref{L:invariant}).  (Of course, every $\phi$ is invariant under the trivial group $\{ \tId \} \subset \tO(V)$.)  Several examples are discussed in Section \ref{S:eg}, and a vector-product variation of Proposition \ref{P:cp} is given in Proposition \ref{P:prod}.

In Section \ref{S:parallel_calib}, Proposition \ref{P:cp} is generalized to a parallel calibrations on a connected, $n$--dimensional, Riemannian manifold $M^n$.  Given an $n$-dimensional $H$--manifold $M$, a $H$--invariant $\phi \in \tw^pV^*$ naturally defines a parallel $p$-form $\varphi$ on $M$.  Conversely, every parallel $p$-form $\varphi$ on a Riemannian manifold arises in this fashion.  (See \S\ref{S:varphi} for a description of the construction.)  As a parallel form, $\varphi$ is {\it a priori} closed and thus a calibration on $M$.  Similarly, $\Phi$ defines a sub-bundle $\Phi_M \subset \tw^p T^*M$.  Let $\sP \subset \Omega^p(M)$ denote smooth sections of $\Phi_M$.  A $p$--dimensional submanifold $N^p\subset M$ is an \emph{integral submanifold of $\sP$} if $\sP_{|N} = \{0\}$.

\begin{theorem}\label{T:eds}  Assume that $M^n$ is a connected Riemannian manifold, and $\varphi$ a parallel calibration.  A submanifold $N^p$ is $\varphi$--critical if and only if $N$ is an integral manifold of $\sP$.  In particular, every calibrated submanifold of $M$ is an integral manifold of $\sP$.  
\end{theorem}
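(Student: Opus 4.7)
The plan is to reduce Theorem \ref{T:eds} to a fibrewise application of Proposition \ref{P:cp}. First I would observe that, since $\varphi$ is parallel, the holonomy group $H \subset \tO(V)$ at any reference point fixes the constant-coefficient form $\phi \in \tw^p V^*$ to which $\varphi$ corresponds. Lemma \ref{L:invariant} then implies that the subspace $\Phi \subset \tw^p V^*$ produced by Proposition \ref{P:cp} is $H$-invariant, so $\Phi$ is preserved by parallel transport and determines a well-defined sub-bundle $\Phi_M \subset \tw^p T^*M$ whose fibre $\Phi_{M,x}$ at $x \in M$ is identified with $\Phi$ under any parallel orthonormal frame at $x$.

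With that setup, the key step is to apply Proposition \ref{P:cp} pointwise: at each $x \in M$,
\begin{equation*}
  C(\varphi)_x \ = \ \left\{\, \xi \in \tGr_o(p,T_xM) \ : \ \psi_x(\xi) = 0 \ \text{for all} \ \psi \in \sP \,\right\}.
\end{equation*}
This is the substantive content of the theorem; everything that follows is formal. The translation now goes: $N^p$ is $\varphi$--critical iff $T_xN \in C(\varphi)_x$ for every $x \in N$, iff $\psi_x(T_xN) = 0$ for every $\psi \in \sP$ and every $x \in N$, iff $\psi_{|N} \equiv 0$ for every $\psi \in \sP$, iff $N$ is an integral manifold of $\sP$. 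For the concluding sentence I would note that $\tGr(\varphi)_x$ consists of global maximizers of the continuous function $\varphi_x$ on the compact manifold $\tGr_o(p,T_xM)$, so $\tGr(\varphi)_x \subset C(\varphi)_x$; thus every calibrated submanifold is $\varphi$--critical and, by what precedes, an integral manifold of $\sP$.

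The main obstacle is the bundle-theoretic step of the first paragraph: one has to confirm that the assignment $\phi \mapsto \Phi$ of Proposition \ref{P:cp} is natural with respect to the $\tO(V)$-action, so that $H$-invariance of $\phi$ really does force $H$-invariance of $\Phi$, and hence that the sub-bundle $\Phi_M$ is globally well-defined on the (possibly non-parallelizable) manifold $M$. Once this equivariance is checked — and Lemma \ref{L:invariant} is stated precisely to do so — the theorem becomes a pointwise tautology, with no further differential-geometric input required.
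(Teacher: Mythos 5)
Your proof is correct and follows essentially the same route as the paper: the paper likewise constructs $\Phi_M$ by noting that the holonomy group $H$ (equivalently, the structure group of the torsion-free $H$--structure) stabilizes $\phi$, invokes Lemma \ref{L:invariant} to get $H$--invariance of $\Phi$ and hence a well-defined sub-bundle $\Phi_{M,x} := (u^{-1})^*(\Phi)$, and then reduces the theorem to a pointwise application of Proposition \ref{P:cp}. Your closing observation that calibrated planes are maximizers, hence critical points, matches the inclusion $\tGr(\varphi) \subset C(\varphi)$ already noted in the paper.
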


\noindent 
Proposition \ref{P:prod} (the vector-product variant) easily generalizes to give an alternative formulation of the $\varphi$--critical submanifolds as those submanifolds $N$ with the property that $T_xN$ is closed under an alternating $(p-1)$--fold vector product $\rho : \tw^{p-1} TM \to TM$.  

\medskip

If $N \subset M$ is $\varphi$--critical, then $\varphi_{|N} = \varphi_o \, \tvol_N$, where $\varphi_o$ is a constant.  Refer to this constant as the \emph{critical value of $\varphi$ on $N$}.

\begin{theorem} \label{T:minimal}
Assume that $M$ is a Riemannian manifold,  $\varphi \in \Omega^p(M)$ a parallel calibration, and $N \subset M$ a $\varphi$--critical submanifold.  If the critical value of $\varphi$ on $N$ is nonzero, then $N$ is a minimal submanifold of $M$.
\end{theorem}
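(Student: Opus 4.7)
The plan is to compare two integrals under a normal variation of $N$: the volume $\tvol(N_t)$ and the $\varphi$-integral $\int_{N_t}\varphi$, where $N_t := f_t(N)$ for $f_t(x):=\exp_x(tV(x))$ and $V$ is a compactly supported normal vector field along $N$. Since $\varphi$ is closed, Cartan's formula gives $\cL_V\varphi = d(\iota_V\varphi)$, and Stokes' theorem forces
\[
  \frac{d}{dt}\Big|_{t=0}\!\int_{N_t}\!\varphi \;=\; \int_N d\bigl((\iota_V\varphi)|_N\bigr) \;=\; 0.
\]
If one can separately express this same derivative as $\varphi_o \cdot \frac{d}{dt}|_{t=0}\tvol(N_t)$, then the hypothesis $\varphi_o \neq 0$ forces the first variation of volume to vanish along every such $V$, and the first-variation formula $\frac{d}{dt}|_{t=0}\tvol(N_t) = -\int_N \langle H, V\rangle\,\tvol_N$ forces the mean curvature vector $H$ of $N$ to vanish, i.e., $N$ is minimal.

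To identify the derivative, let $\xi_t(x)\in\tGr_o(p,T_{f_t(x)}M)$ denote the unit oriented tangent $p$-vector to $N_t$ at $f_t(x)$, and $J_t$ the Jacobian of $f_t$. A standard computation gives $f_t^*\varphi = J_t(x)\,\varphi_{f_t(x)}(\xi_t(x))\,\tvol_N$ on $N$. At $t=0$ we have $J_0\equiv 1$ and, by the constancy-of-critical-value observation preceding the theorem, $\varphi_x(\xi_0(x))\equiv \varphi_o$. Differentiating under the integral,
\[
  \frac{d}{dt}\Big|_{t=0}\!\int_{N_t}\!\varphi \;=\; \varphi_o\,\frac{d}{dt}\Big|_{t=0}\tvol(N_t) \;+\; \int_N \frac{d}{dt}\Big|_{t=0}\varphi_{f_t(x)}(\xi_t(x))\,\tvol_N.
\]

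The crucial point is that the second integrand vanishes pointwise. Since $\nabla\varphi = 0$, covariant differentiation along the curve $t\mapsto f_t(x)$ gives $\frac{d}{dt}|_{t=0}\varphi(\xi_t) = \varphi_x\bigl(\tfrac{D}{dt}|_{t=0}\xi_t\bigr)$. Parallel transport is an isometry and commutes with wedge product, so the parallel translates of $\xi_t$ back to $x$ trace a curve in $\tGr_o(p,T_xM)$ through $\xi_0(x) = T_xN$; hence $\tfrac{D}{dt}|_{t=0}\xi_t$ lies in $T_{\xi_0}\tGr_o(p,T_xM)$. Because $\varphi_x$ is linear on $\tw^p T_xM$, the $\varphi$-critical condition at $\xi_0$ is exactly that $\varphi_x$ annihilates $T_{\xi_0}\tGr_o(p,T_xM)$, and the integrand is zero. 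Combining with the first display gives $\varphi_o\,\frac{d}{dt}|_{t=0}\tvol(N_t) = 0$, finishing the argument.

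The only delicate step is this last pointwise cancellation, and it is exactly the content of the parallelism hypothesis: without $\nabla\varphi = 0$ one could not convert the variation of $\varphi(\xi_t)$ (as the plane moves off $N$) into a statement living entirely at $x$, and the $\varphi$-critical hypothesis at $x$ alone would not suffice.
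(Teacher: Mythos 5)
Your proof is correct, and it takes a genuinely different route from the paper's. The paper argues pointwise on the adapted frame bundle $\cF_N$: criticality is the vanishing $\varphi^a_s = 0$ there; differentiating this via $\td\varphi_{i_1\cdots i_p} = (\vartheta.\varphi)_{i_1\cdots i_p}$ (the sole use of parallelism) and substituting $\vartheta^s_a = h^s_{ab}\,\w^b$ yields the algebraic identity $\varphi_o\, h^s_{ac} = \varphi^{ab}_{st}\, h^t_{bc}$, whose trace kills the mean curvature because $h$ is symmetric while $\varphi^{ab}_{st}$ is skew in $a,b$. You instead run a first-variation argument: homology invariance of $\int_{N_t}\varphi$ (closedness plus Stokes, with compactly supported $V$ to localize), the splitting of $\tfrac{\td}{\td t}\big|_0\int_{N_t}\varphi$ into $\varphi_o$ times the first variation of volume plus a pointwise term, and the observation that this term dies because $\nabla\varphi = 0$ lets you parallel-transport the moving tangent plane back into the fixed fibre $\tGr_o(p,T_xM)$, where criticality says precisely that the linear functional $\varphi_x$ annihilates $T_{\xi_0}\tGr_o(p,T_xM)$ --- this cancellation is exactly Lemma \ref{L:cp1} in Grassmannian form, and your handling of it (covariant derivative of $\xi_t$ landing in the tangent space of $\tGr_o$) is sound. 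What each approach buys: the paper's computation is purely local, needs no Stokes, compact-support, or first-variation machinery, yields the stronger pointwise identity constraining the full second fundamental form (not just its trace), and establishes the constancy of $\varphi_o$ along the way; your argument is conceptually cleaner, exhibiting minimality as ``closed form $+$ criticality $\Rightarrow$ vanishing first variation of volume,'' at the cost of quoting the constancy of $\varphi_o$ from the remark preceding the theorem --- though it is worth noting that your own pointwise lemma, applied to curves lying inside $N$ rather than normal geodesics, re-derives that constancy, so your proof is self-contained in substance.
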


\noindent
Theorems \ref{T:eds} and \ref{T:minimal} are proven in Sections \ref{S:varphi} and \ref{S:minimal}, respectively.

Finally in \S\ref{S:I} it is shown that the ideal $\sI \subset \Omega(M)$ algebraically generated by $\sP$ is differentially closed and that, in general, the system fails to be involutive.

\subsection*{Acknowledgements}  I am indebted to R. Harvey for valuable feedback, and bringing connections with \cite{HLpot_thy} to my attention.  I especially appreciate a pointed observation that led me to Lemma \ref{L:spinors}.

\subsection*{Notation}  
Fix index ranges
$$
  i,j \in \{ 1,\ldots,n\} \, , \quad 
  a,b \in \{ 1,\ldots,p \} \, , \quad
  s,t \in \{ p+1,\ldots,n \} \, .
$$
The summation convention holds:  when an index appears as both a subscript and superscript in an expression, it is summed over.

\section{The infinitesimal picture} \label{S:V}

\subsection{The basics} \label{S:basics}
 
Let $\phi \in \tw^pV^*$ and $\xi = e_1\wedge\cdots\wedge e_p \in \tGr_o(p,V)$.  Then $\phi(\xi) = \phi(e_1,\ldots,e_p)$ is a well-defined function on $\tGr_o(p,V)$.  Fix a nonzero $\phi \in \tw^p V^*$, with the property that $\mathrm{max}_{\tGr_o(p,V)} \, \phi = 1$.  The set of (oriented) \emph{calibrated $p$-planes} is  
\begin{displaymath}
  \tGr(\phi) \ := \ \left\{ \xi \in \tGr_o(p,V) \ | \ \phi(\xi) = 1 \right\} \, .
\end{displaymath}
Let $C(\phi)\subset\tGr_o(p,V)$ denote the critical points of $\phi$.  Then 
$$
  \tGr(\phi) \ \subset \ C(\phi) \, .
$$

Let $\cF_V$ denote the set of orthonormal bases (or frames) of $V$.  Given $e = (e_1 , \ldots , e_n) \in \cF_V$, let $e^* = (e^1,\ldots,e^n)$ denote the dual coframe.  Then
\begin{displaymath}
  \phi \ = \ \phi_{i_1\cdots i_p}  e^{i_1} \wedge \cdots \wedge e^{i_p} \, , 
\end{displaymath}
uniquely determines functions $\phi_{i_1\cdots i_p}$, skew-symmetric in the indices, on $\cF_V$.  Note that $|\phi_{i_1\cdots i_p}| \le 1$, and $\xi = e_{i_1}\wedge\cdots\wedge e_{i_p} \in \tGr(\phi)$ if and only if equality holds.  

Next we compute $\td\phi_{|\xi}$.  Let $\tO(V)$ denote the Lie group of linear transformations $V\to V$ preserving the inner product, and let $\fo(V)$ denote its Lie algebra.  Let $\theta$ denote the $\fo(V)$--valued Maurer-Cartan form on $\cF_V$: at $e \in \cF_V$, $\theta_e = \theta^j_k \, e_j \ot e^k$, where the coefficient 1-forms $\theta^j_k = -\theta^k_j$ are defined by $\td e_j = \theta_j^k \, e_k$.  Then $\{ \theta^i_j \ | \ i < j \}$ is a basis for the 1-forms on $\cF_V$.  

If $\xi = e_{i_1}\wedge\cdots\wedge e_{i_p}$ is viewed as a map $\cF_V \to \tGr_o(p,V)$, then 
$$
  \td \xi \ = \ \sum_{1\le a\le p} 
 e_{i_1}\wedge\cdots\wedge e_{i_{a-1}}\wedge \,  \theta_{i_a}^k e_k \, \wedge
  e_{i_{a+1}}\wedge\cdots\wedge e_{i_p} \, .
$$
Thus
\begin{eqnarray*}
   \td\phi_{\xi} & = & \td \phi(e_{i_1},\ldots,e_{i_p}) \\
   & = & \sum_{1\le a\le p} \phi( e_{i_1} , \ldots , e_{i_{a-1}} ,\, \theta_{i_a}^k
   e_k ,\, e_{i_{a+1}} , \ldots , e_{i_p} ) \\
   & = & \sum_{1\le a\le p} \theta_{i_a}^k \,\phi( e_{i_1} , \ldots , e_{i_{a-1}} , 
   e_k , e_{i_{a+1}} , \ldots , e_{i_p} ) \\
   & = & \sum_{1\le a\le p} 
   \phi_{i_1\cdots i_{a-1} k i_{a+1}\cdots i_p} \, \theta^k_{i_a} \, .
\end{eqnarray*}
The skew-symmetry of $\phi$ and $\theta$ imply that $\phi_{i_1\cdots i_{a-1} k i_{a+1}\cdots i_p} \, \theta^k_{i_a}$ vanishes if $k \in \{ i_1,\ldots,i_p\}$.  The $\{ \theta^k_{i_a} \ | \ 1\le a\le p \, , \ k \not\in \{i_1,\ldots,i_p\} \}$ are linearly independent on $\cF_V$, and may be naturally identified with linearly independent 1-forms on $\tGr_o(p,V)$ at $\xi$.  Consequently, $\td \phi_\xi = 0$, and \begin{equation} \label{E:cp}
  \xi = e_{i_1}\wedge\cdots\wedge e_{i_p} \hbox{ is a critical point if and only if }
  \phi_{i_1\cdots i_{a-1} k i_{a+1}\cdots i_p} \, \theta^k_{i_a} \ = \ 0 \, .
\end{equation}  
An equivalent, index-free formulation of this observation is given by the lemma below. 

\begin{lemma} \label{L:cp1}
  A $p$-plane $\xi$ is a critical point of $\phi$ if and only if $(v \lefthook\phi)_{|\xi} = 0$ for all $v \in \xi^\perp$.
\end{lemma}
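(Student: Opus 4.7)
The plan is to show that the index-free statement is just a repackaging of the characterization \eqref{E:cp} established in the preceding calculation, so the lemma follows with essentially no new work.

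First, I would reduce to a convenient frame: since the critical-point condition and the condition $(v \lefthook \phi)_{|\xi} = 0$ for $v \in \xi^\perp$ are both invariant under $\tO(V)$, I may choose an orthonormal frame $(e_1,\ldots,e_n) \in \cF_V$ adapted to $\xi$, so that $\xi = e_1 \wedge \cdots \wedge e_p$ and $\xi^\perp = \tspan\{e_{p+1},\ldots,e_n\}$. By linearity in $v$, the condition $(v \lefthook \phi)_{|\xi} = 0$ for all $v \in \xi^\perp$ is equivalent to $(e_s \lefthook \phi)_{|\xi} = 0$ for every $s \in \{p+1,\ldots,n\}$.

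Next I would unpack the restriction. The $(p-1)$-form $e_s \lefthook \phi$ restricted to the $p$-plane $[\xi]$ vanishes if and only if it evaluates to zero on every $(p-1)$-tuple drawn from $\{e_1,\ldots,e_p\}$; equivalently, $\phi(e_s, e_1, \ldots, \hat{e}_a, \ldots, e_p) = 0$ for every $a \in \{1,\ldots,p\}$. Up to sign this is exactly the coefficient $\phi_{1 \cdots (a-1)\, s\, (a+1) \cdots p}$, so $(v \lefthook \phi)_{|\xi} = 0$ for all $v \in \xi^\perp$ is equivalent to the vanishing of all such coefficients for $a \in \{1,\ldots,p\}$ and $s \in \{p+1,\ldots,n\}$.

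Finally I would invoke \eqref{E:cp} together with the stated linear independence of $\{\theta^k_{i_a} \mid 1 \le a \le p,\ k \notin \{i_1,\ldots,i_p\}\}$ on $\tGr_o(p,V)$ at $\xi$. With $\xi = e_1 \wedge \cdots \wedge e_p$, the sum in \eqref{E:cp} is over $k \in \{p+1,\ldots,n\}$ (since terms with $k \in \{1,\ldots,p\}$ already vanish by skew-symmetry), so its vanishing is equivalent to $\phi_{1 \cdots (a-1)\, k\, (a+1) \cdots p} = 0$ for all such $a$ and $k$. This matches the condition derived in the previous paragraph, completing the equivalence. There is no serious obstacle here; the only point requiring a moment's care is tracking the sign/index conventions when identifying $(e_s \lefthook \phi)(e_1,\ldots,\hat{e}_a,\ldots,e_p)$ with $\phi_{1 \cdots (a-1)\, s\, (a+1) \cdots p}$.
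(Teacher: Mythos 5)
Your proposal is correct and matches the paper's own (implicit) argument: the paper presents Lemma \ref{L:cp1} as a direct index-free restatement of \eqref{E:cp}, relying on exactly the linear independence of the $\theta^k_{i_a}$ and the identification of the coefficients $\phi_{i_1\cdots i_{a-1}\,k\,i_{a+1}\cdots i_p}$ with values of $(e_k \lefthook \phi)_{|\xi}$ that you spell out. Your write-up merely makes explicit the frame adaptation and the coefficient bookkeeping that the paper leaves to the reader.
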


\begin{remark*}
The lemma was first observed by Harvey and Lawson (cf. Remark on page 78 of {HL}), and is often referred to as the First Cousin Principle.
\end{remark*}

The lemma allows us to characterize the critical points $\xi \in \tGr_o(p,V)$ of $\phi$ as the $p$-planes on which a linear subspace $\Phi \subset \tw^p V^*$ vanishes.  Forget, for a moment, that $\theta$ is a 1-form on $\cF_V$ and regard it simply as an element of $\fo(V)$.  Let $\theta.\phi$ denote the action of $\theta$ on $\phi$.  The action yields a map $\sfP: \fo(V) \to \tw^p V^*$ sending $\theta \mapsto \theta.\phi$.  Define
\begin{displaymath}
  \Phi \ := \ \sfP(\fo(V)) \ \subset \ \tw^pV^* \, .
\end{displaymath} 
Notice that the $e^{i_1}\wedge\cdots\wedge e^{i_p}$--coefficient of $\theta . \phi$ is $\phi_{i_1\cdots i_{a-1} k i_{a+1}\cdots i_p} \, \theta^k_{i_a}$.  From this observation, \eqref{E:cp}, and the fact that the Maurer-Cartan form $\theta_e : T_e\cF_V \to \fo(V)$ is a linear isomorphism, we deduce the following.

\begin{proposition} \label{P:cp}
  The set of $\phi$--critical planes is $C(\phi) = \tGr_o(p,V) \, \cap \, \tAnn(\Phi)$.
\end{proposition}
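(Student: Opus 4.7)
The plan is to match the two conditions directly: $\xi \in \tAnn(\Phi)$ means $(\theta.\phi)(\xi) = 0$ for every $\theta \in \fo(V)$, while by \eqref{E:cp} the $p$-plane $\xi = e_{i_1}\wedge\cdots\wedge e_{i_p}$ is critical iff the 1-form $\phi_{i_1\cdots i_{a-1} k i_{a+1}\cdots i_p}\,\theta^k_{i_a}$ vanishes identically on $\cF_V$. The proposition will follow once we see that these two vanishing conditions describe the same set of $p$-planes.

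First I would fix $\xi \in \tGr_o(p,V)$, pick an adapted frame $e \in \cF_V$ with $\xi = e_{i_1}\wedge\cdots\wedge e_{i_p}$, and expand $(\theta.\phi)(\xi)$ using the standard derivation rule for the $\fo(V)$-action on $\tw^p V^*$:
\begin{displaymath}
  (\theta.\phi)(e_{i_1},\ldots,e_{i_p}) \ = \ -\sum_{a}\phi(e_{i_1},\ldots,\theta e_{i_a},\ldots,e_{i_p}) \ = \ -\sum_{a,k}\theta^k_{i_a}\,\phi_{i_1\cdots i_{a-1}k i_{a+1}\cdots i_p}\,.
\end{displaymath}
Up to sign, this is exactly the $e^{i_1}\wedge\cdots\wedge e^{i_p}$--coefficient of $\theta.\phi$ already recorded in the paragraph preceding the proposition; so for the fixed $\xi$, the condition $\xi \in \tAnn(\Phi)$ reads: this scalar vanishes for every $\theta \in \fo(V)$.

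Next I would invoke that the Maurer-Cartan form gives a linear isomorphism $\theta_e : T_e\cF_V \to \fo(V)$, so that the matrix entries $\theta^k_j$ (for $j<k$) can be specified independently as $\theta$ varies over $\fo(V)$. The vanishing of $\theta^k_{i_a}\,\phi_{i_1\cdots i_{a-1}k i_{a+1}\cdots i_p}$ for all such $\theta$ is therefore equivalent to the coefficient-wise vanishing of $\phi_{i_1\cdots i_{a-1}k i_{a+1}\cdots i_p}$ for every $k \notin \{i_1,\ldots,i_p\}$ (indices $k$ inside $\{i_1,\ldots,i_p\}$ contribute zero by the skew-symmetry of $\phi$). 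This is precisely \eqref{E:cp}, and the identification $C(\phi) = \tGr_o(p,V)\cap\tAnn(\Phi)$ follows.

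There is no substantive obstacle: the proof is essentially a transcription of the calculation in the paragraph above the proposition into the intrinsic language of $\tAnn(\Phi)$. The one item to watch is the sign and factorial bookkeeping when passing between $(\theta.\phi)(\xi)$ and the coefficient expression, but these conventions drop out of the equivalence of vanishing conditions.
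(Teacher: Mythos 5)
Your proof is correct and is essentially the paper's own argument: the paper likewise observes that the $e^{i_1}\wedge\cdots\wedge e^{i_p}$--coefficient of $\theta.\phi$ is $\phi_{i_1\cdots i_{a-1} k i_{a+1}\cdots i_p}\,\theta^k_{i_a}$ and then deduces the proposition from \eqref{E:cp} together with the fact that the Maurer--Cartan form $\theta_e : T_e\cF_V \to \fo(V)$ is a linear isomorphism, so the entries $\theta^k_{i_a}$ vary independently. One cosmetic point: for $k = i_a$ the term is killed by the skew-symmetry of $\theta$ (vanishing diagonal), not of $\phi$ --- the skew-symmetry of $\phi$ handles only $k = i_b$ with $b \ne a$ --- but this does not affect the equivalence.
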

\medskip

\noindent{\it Remark.}  The map $\sfP$ is the restriction of the map $\lambda_\phi : \mathrm{End}(V) \to \tw^pV^*$ in \cite{HLpot_thy} to $\fo(V)$.
Corollary 2.6 of \cite{HLpot_thy} is precisely the observation that elements of $\Phi$ vanish on $\tGr(\phi) \subset C(\phi)$.  Indeed, Proposition \ref{P:cp} above follows from Proposition A.4 of that paper.  This is seen by observing that if $A \in \fo(V) \subset \mathrm{End}(V)$, then $\mathrm{tr}_\xi A = 0$.  Then their (A.2) reads $\lambda_\phi(A)(\xi) = \phi(D_{\widetilde A} \xi)$.  It now suffices to note that their $\{ \lambda_\phi(A) \ | \ A \in \fo(V) \}$ is our $\Phi$, and that $\{ D_{\widetilde A} \xi \ | \ A \in \fo(V) \} = T_\xi \tGr_o(p,V)$.
\medskip

\noindent{\it Remark.}  Each $\phi \in \tw^pV^*$ naturally determines an alternating $(p-1)$-fold vector product $\rho$ on $V$.  An equivalent formulation of Proposition \ref{P:cp} is given by Proposition \ref{P:prod} which asserts that $\xi \in C(\phi)$ and only if $[\xi] \in \tGr(p,V)$ is $\rho$--closed.  


\section{Examples and the product characterization} \label{S:eg}

\subsection{Invariant forms}\label{S:module}

Let $G$ denote the stabilizer of $\phi$ in $\tO(V)$.  Many of the calibrations that we are interested in have a nontrivial stabilizer; but, of course, all statements hold for trivial $G$.  Observe that $\Phi$ is a $\fg$-module.  This is seen as follows.  Let $\fg$ denote the Lie algebra of $G$.  As a $\fg$-module $\fo(V)$ admits a decomposition of the form $\fo(V) = \fg \op \fg^\perp$.  By definition, the kernel of $\sfP$ is $\fg$.  In particular, $\Phi = \sfP(\fg^\perp)$.  It is straightforward to check that $\sfP$ is $G$-equivariant, and we have the following lemma.
\begin{lemma}\label{L:invariant}
  The subspace $\Phi = \sfP(\fg^\perp) \subset \tw^pV^*$ is isomorphic to 
  $\fg^\perp$ as a $G$-module.
\end{lemma}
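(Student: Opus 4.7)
The plan is to exhibit $\sfP$ restricted to $\fg^\perp$ as a $G$-equivariant linear isomorphism onto $\Phi$. Three ingredients are needed: identifying $\ker \sfP$, producing a $G$-invariant complement of $\fg$ in $\fo(V)$, and verifying equivariance of $\sfP$.

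First, I would verify $\ker \sfP = \fg$. Since $G$ is the stabilizer of $\phi$ under the natural $\tO(V)$--action on $\tw^p V^*$, its Lie algebra consists of those $A \in \fo(V)$ for which the one-parameter subgroup $\exp(tA)$ fixes $\phi$; differentiating at $t=0$ yields $A.\phi = 0$, that is, $\sfP(A) = 0$. Conversely, if $\sfP(A) = 0$ then $\exp(tA).\phi$ is constant in $t$, so $A \in \fg$.

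Next, I would construct the complement. The bilinear form $\langle A, B\rangle := -\mathrm{tr}(AB)$ is a positive-definite inner product on $\fo(V)$ that is invariant under the adjoint action of $\tO(V)$, and hence under $G \subset \tO(V)$. Taking $\fg^\perp$ to be the $\langle\,\cdot\,,\cdot\,\rangle$--orthogonal complement of $\fg$ then yields the $G$-invariant splitting $\fo(V) = \fg \op \fg^\perp$ posited in the preceding paragraph. Equivariance of $\sfP$ is a direct computation: for $g \in G$ and $A \in \fo(V)$,
\begin{displaymath}
  \sfP(\tAd_g A) \ = \ (\tAd_g A).\phi \ = \ g.\bigl( A.(g^{-1}.\phi)\bigr) \ = \ g.(A.\phi) \ = \ g.\sfP(A),
\end{displaymath}
where the penultimate equality uses $g^{-1}.\phi = \phi$.

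Combining the three ingredients, $\sfP|_{\fg^\perp}: \fg^\perp \to \Phi$ is a $G$-equivariant linear map, surjective by the definition $\Phi = \sfP(\fg^\perp)$ and injective because its kernel is $\fg \cap \fg^\perp = \{0\}$; it is therefore the desired isomorphism of $G$-modules. No serious obstacle arises: the assertion is essentially the first isomorphism theorem, with the existence of a $G$-invariant complement guaranteed by the $\tO(V)$-invariance of the trace form on $\fo(V)$. The only point requiring any care is the identification $\ker \sfP = \fg$, which is the standard fact that the Lie algebra of an isotropy subgroup is the infinitesimal annihilator of the fixed element.
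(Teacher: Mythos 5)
Your proof is correct and follows the same route as the paper, which merely sketches it in the paragraph preceding the lemma: identify $\ker\sfP = \fg$, split $\fo(V) = \fg \op \fg^\perp$ $G$-invariantly, and note $G$-equivariance of $\sfP$ so that $\sfP|_{\fg^\perp}$ is an equivariant isomorphism onto $\Phi$. You have simply supplied the details the paper leaves as ``straightforward to check'' (the exponential argument for the kernel, the $\tAd$-invariant trace form for the complement, and the explicit equivariance computation), all of which are accurate.
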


\noindent

Below I identify $\Phi$ for some well-known examples.  The calibrations $\phi$ and characterizations of $\tGr(\phi)$ in \S\ref{S:ass}--\ref{S:sLag} were introduced in \cite{HL}.  

\subsection{Associative calibration}\label{S:ass}

Consider the standard action of the exceptional $G=G_2$ on the imaginary octonions $V = \tIm \bO = \bR^7$.  As a $G_2$--module the third exterior power decomposes as $\tw^3V^* = \bR \op  V^3_{1,0} \op  V^3_{2,0}$.  (Cf. \cite[Lemma 3.2]{FG} or \cite[p. 542]{BrExHol}.)  Here $V^3_{1,0} = V$ as $G_2$--modules.  The trivial subrepresentation $\bR \subset \tw^3V^*$ is spanned by an invariant 3-form $\phi$, the associative calibration.  It is known that $\xi \in \tGr(\phi)$ if and only if the forms $V^3_{1,0} = \{ \ast(\phi\wedge\a) \ | \ \a \in V^* \}$ vanish on $\xi$ \cite[Corollary 1.7]{HL}.  Here $*(\phi\wedge\alpha)$ denotes the Hodge star operation on the 4-form $\phi\wedge\alpha$.  As $\Phi = V^3_{1,0}$, we have $C(\phi) = \tGr(\phi)$.  

\subsection{Coassociative calibration}\label{S:coa}

Again we consider the standard action of $G_2$ on $V=\tIm\bO = V_{1,0}$.  The Hodge star commutes with the $G_2$ action.  So the fourth exterior power decomposes as $\tw^4V^* = V^4_{0,0} \op  V^4_{1,0} \op  V^4_{2,0}$, with $V^4_{a,b} = \ast V^3_{a,b}$.  The trivial subrepresentation is spanned by the invariant coassociative calibration $\ast\phi$.  A 4-plane $\xi$ is calibrated by $\ast\phi$ if and only if $\phi_{|\xi} \equiv 0$ \cite[Corollary 1.19]{HL}.  Equivalently, the 4-forms of $V^4_{1,0} = \{ \phi\wedge\a \ | \ \a \in V^* \}$ vanish on $\xi$.  As $\Phi = V^4_{1,0}$, we again have $C(\phi) = \tGr(\phi)$.

\subsection{Cayley calibration}\label{S:cay}

Consider the standard action of $G = B_3 = \tSpin(7) \subset \tSO(8)$ on the octonions $V=\bO=\bR^8$.  The fourth exterior power decomposes as  $\tw^4 V^*=  V^4_{0,0,0} \op  V^4_{1,0,0} \op  V^4_{2,0,0} \op  V^4_{0,0,2}$.  (Cf. \cite[p. 548]{BrExHol} or \cite[Lemma 3.3]{MR859598}.)  The trivial subrepresentation $V^4_{0,0,0}$ is spanned by the invariant, self-dual Cayley 4-form $\phi = \ast\phi$.  It is known that $\xi \in \tGr(\phi)$ if and only if the forms $V^4_{1,0,0} = \{ \a.\phi \ | \ \a \in  V^2_{1,0,0} \}$ vanish on $\xi$ \cite[Proposition 1.25]{HL}; here $ V^2_{1,0,0}  = \{ \a \in \tw^2 V^* \ | \ \ast(\a\wedge\phi) = 3\, \alpha \} \simeq \fg^\perp$.  As $\Phi = V^4_{1,0,0}$, we have $C(\phi) = \tGr(\phi)$.

\subsection{Special Lagrangian calibration}\label{S:sLag}

Regard $V := \bC^{m}$ as a real vector space.  Given the standard coordinates $z = x+\ti y$, 
$$
  V^* = \tspan_\bR \left\{ \half(\dz + \dbz) \, , \, 
                      -\ihalf(\dz - \dbz) \right\} \, .
$$ 
Set 
\begin{eqnarray*}
  \sigma & = & - \tfrac{\ti}{2} \left( \dz^1 \wedge \dbz^1 + \cdots + 
           \dz^m \wedge \dbz^m \right) \, , \\
  \Upsilon & = & {}\td z^1 \wedge \cdots \wedge \td z^m \, .
\end{eqnarray*}
The special Lagrangian calibration is $\tRe \Upsilon$.  An $m$-dimensional submanifold $i : M \to V$ is calibrated if and only if $i^* \sigma = 0 = i^* \tIm \Upsilon$.  (Recall that $i^*\sigma = 0$ characterizes the $m$-dimensional Lagrangian submanifolds.)

The special Lagrangian example is distinct from those above in that 
$$
  \fsu(m)^\perp \ = \  \bR \op W \ \subset \ \tw^2 V
$$
is reducible as an $\fsu(m)$--module.  The trivial subrepresentation is spanned by $\sigma$.  

The $\fsu(m)$ module $\Phi$ decomposes as $\Phi_0 \op \Phi_W$, where $\Phi_0 = \tspan_\bR \{ \tIm\Upsilon \}$ and $\Phi_W = W.(\tRe\Upsilon)$.  The elements of the sub-module $\Phi_W$ may be described as follows.  Let $J \subset \{1,\ldots,m\}$ be a multi-index of length $|J|=\ell$, and $\dz^J := dz^{j_1}\wedge\cdots\wedge\dz^{j_{\ell}}$.  The reader may confirm that $\Phi_W = \tspan_\bR\{ \tRe \, \dz^J \wedge \sigma \, , \ \tIm \, \dz^J \wedge \sigma \ : \ |J|=m-2 \}$.  

In the remark of \cite[p.90]{HL} Harvey and Lawson showed that an $m$-plane $\zeta$ is Lagrangian if and only if the forms $\Psi := \{ \td z^J \wedge \sigma^p \ : \ 2p+|J| = m \, , \ p>0 \} \supset \Phi_W$ vanish on $\zeta$.   So $\pm\xi \in \tGr(\tRe\Upsilon)$ if and only if $\tIm\Upsilon_{|\xi} = 0 = \Psi_{|\xi}$, while $\xi \in C(\tRe\Upsilon)$ if and only if $\tIm\Upsilon_{|\xi} = 0 = \Phi_{W}{}_{|\xi}$.  So it seems \emph{a priori} that a critical $\xi$ need not be calibrated.  Nonetheless Zhou \cite[Theorem 3.1]{zhou} has shown that $\pm\tGr(\tRe\Upsilon) = C(\tRe\Upsilon)$.

\subsection{Squared spinors.}\label{S:spinors}

In \cite{DH} Dadok and Harvey construct calibrations $\phi \in \tw^{4p}V^*$ on vector spaces of dimension $n=8m$ by squaring spinors.  Let me assume the notation of that paper: in particular, $\bP = \bS^+ \op \bS^-$ is the decomposition of the space of pinors into positive and negative spinors, $\e$ an inner product on $\bP$, and $\tCl(V) \simeq \tEnd_\bR(\bP)$ the Clifford algebra of $V$.  Given $x,y,z \in \bP$, $x\circ y \in\tEnd_\bR(\bP)$ is the linear map $z \mapsto \e(y,z) x$.

Given a unit $x \in \bS^+$, $\underline{\phi} = 16^m x \circ x \in \tEnd_\bR(\bS^+) \subset \tEnd_\bR(\bP)$ may be viewed as an element of $\tw V^* \simeq \tCl(V)$.  Let $\phi_k \in \tw^k V^*$ be the degree $k$ component of $\underline{\phi}$.  Each $\phi_k$ is a calibration, and $\phi_k$ vanishes unless $k = 4p$.  (Also, $\phi_0 = 1$ and $\phi_n = \mathit{vol}_V$.)
The Cayley calibration of \S\ref{S:cay} is an example of such a calibration; see \cite[Prop. 3.2]{DH}.

Given such a calibration $\phi = \phi_{4p}$, Dadok and Harvey construct $4p$-forms $\Psi_1, \ldots , \Psi_N$, $N = \frac{1}{2}(16)^m-1$, that characterize $\tGr(\phi)$; that is, $\xi \in \tGr(\phi)$ if and only if $\Psi_j(\xi) = 0$ \cite[Th. 1.1]{DH}.  

\begin{lemma}\label{L:spinors}
The span of the $\Psi_j$ is our $\Phi$.  In particular, $C(\phi) = \tGr(\phi)$.
\end{lemma}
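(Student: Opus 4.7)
The plan is to realize both $\Phi$ and $\tspan\{\Psi_j\}$ as the image of a single linear map from a subspace of $\bS^+$ into $\tw^{4p}V^*$.  Under the identifications $\tCl(V)\simeq\tEnd_\bR(\bP)\simeq\tw V^*$, $\fo(V)$ embeds in $\tCl(V)$ as the quadratic (spin) subspace, and its action on $\tw V^*$ is realized as commutator.  The identity
\[
  [\tilde\theta,\,x\circ y] \ = \ (\tilde\theta x)\circ y + x\circ(\tilde\theta y),
\]
which follows from $\e$-skewness of the spin action on $\bP$, applied to $\underline\phi=16^m\,x\circ x$ and projected to the degree-$4p$ component, gives
\[
  \sfP(\theta) \ = \ 16^m\bigl[(\tilde\theta x)\circ x + x\circ(\tilde\theta x)\bigr]_{4p}.
\]
Define $Q:\bS^+\to\tw^{4p}V^*$ by $Q(y):=16^m[y\circ x+x\circ y]_{4p}$, and set $W:=\{\tilde\theta x:\theta\in\fo(V)\}\subset\bS^+$.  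Then $\Phi=Q(W)$, while by the Dadok--Harvey construction $\tspan\{\Psi_j\}=Q(\tspan(x)^\perp)$.

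The inclusion $W\subset\tspan(x)^\perp$ is immediate from the $\e$-skewness of the spin action, giving $\Phi\subset\tspan\{\Psi_j\}$.  For the reverse inclusion I would show that $Q$ annihilates $W^\perp\cap\tspan(x)^\perp$.  With $G=\tStab(\phi)$, the decomposition $\tspan(x)^\perp = W\oplus(W^\perp\cap\tspan(x)^\perp)$ is a decomposition of $G$-submodules of $\bS^+$, $Q$ is $G$-equivariant, and since $\tStab_{\fo(V)}(x) = \fg$ the map $\theta\mapsto\tilde\theta x$ identifies $W\simeq\fg^\perp$ as a $G$-module, so $Q|_W:W\to\Phi$ is the $G$-module isomorphism guaranteed by Lemma \ref{L:invariant}.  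A Schur-type argument then forces $Q$ to vanish on $W^\perp\cap\tspan(x)^\perp$, provided no $G$-irreducible summand of the complement is isomorphic to a summand of $\Phi\subset\tw^{4p}V^*$; this is the main representation-theoretic content.  Once $\tspan\{\Psi_j\}=\Phi$ is in hand, $C(\phi)=\tGr(\phi)$ follows immediately by combining Proposition \ref{P:cp} with \cite[Th.~1.1]{DH}.

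The main obstacle is the vanishing $Q(W^\perp\cap\tspan(x)^\perp)=0$.  For $m=1$ (Cayley) it is automatic, since $\tSpin(8)$ acts transitively on the unit sphere of $\bS^+$ and the complement is zero.  For $m\ge 2$ the complement can be substantial, and verifying the vanishing requires representation-theoretic input specific to the Dadok--Harvey spinors; this step is likely handled either by a case-by-case comparison with the $G$-isotypic decomposition of $\tw^{4p}V^*$ or by a direct Clifford-algebraic identity showing that the degree-$4p$ part of $y\circ x+x\circ y$ vanishes for $y$ outside the spin orbit direction.
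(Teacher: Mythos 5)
You have reproduced the paper's framework but deferred its substance. The first half of your argument---the commutator identity $[\tilde\theta, x\circ x] = (\tilde\theta x)\circ x + x\circ(\tilde\theta x)$ and the resulting presentation $\Phi = Q(W)$ with $W = \{\tilde\theta x : \theta \in \fo(V)\}$---is exactly the (implicit) first step of the paper's proof, where $\Phi$ is identified with the span of the symmetrized squares $\gamma_j$, the degree-$4p$ components of $16^m(x_j\circ x_0 + x_0\circ x_j)$. But your assertion that $\tspan\{\Psi_j\} = Q(\tspan(x)^\perp)$ holds ``by the Dadok--Harvey construction'' is not correct as stated: Dadok and Harvey define $\Psi_j$ as the degree-$4p$ component of the \emph{unsymmetrized} $16^m\, x_j\circ x_0$. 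The bridge between the two is the one computation the paper actually carries out, namely that a degree-$4p$ element $\xi$ acts $\e$-symmetrically, so that $16^m\langle x_0\circ x_j,\xi\rangle = \e(x_0,\xi x_j) = \e(x_j,\xi x_0) = 16^m\langle x_j\circ x_0,\xi\rangle$, whence $\gamma_j = 2\Psi_j$. The $\e$-skewness of the spin action, which you do invoke, yields the commutator identity but not this symmetry; without it even your ``immediate'' inclusion $\Phi \subset \tspan\{\Psi_j\}$ is unestablished.

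More seriously, the reverse inclusion $\tspan\{\Psi_j\} \subseteq \Phi$ is left as an admitted open problem (``the main obstacle''), with only speculative strategies offered, so the proposal is not a proof; the paper does not take your route at all, but asserts $\Phi = \tspan\{\gamma_j\}$ directly in its sketch and concludes from $\gamma_j = 2\Psi_j$. Moreover, your Schur-type mechanism is aimed at the wrong target. To force $Q$ to vanish on $W^\perp \cap \tspan(x)^\perp$ you would need the irreducible constituents of that complement to be absent from $\tw^{4p}V^*$ (or at least from the span of all degree-$4p$ squares), not merely from $\Phi$: if a constituent $U$ of the complement is non-isomorphic to every summand of $\Phi \simeq \fg^\perp$ but $Q|_U \neq 0$, Schur gives $Q(U) \cap \Phi = 0$ and hence $\tspan\{\Psi_j\} \supsetneq \Phi$---which would refute the lemma, not prove it. (Note also that full vanishing is stronger than necessary; $Q(W^\perp\cap\tspan(x)^\perp) \subseteq \Phi$ would suffice.) Finally, your identification $W \simeq \fg^\perp$ presumes $\tStab_{\fo(V)}(x) = \fg$, which is unjustified in general: the isotropy algebra of the spinor $x$ stabilizes every component $\phi_{4p}$ of $\underline\phi$ simultaneously and may be properly contained in the stabilizer $\fg$ of the single form $\phi = \phi_{4p}$, in which case $\theta \mapsto \tilde\theta x$ does not descend to an isomorphism from $\fg^\perp$.
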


\begin{proof}
Continuing to borrow the notation of \cite{DH}, the proof may be sketched as follows.  Complete $x = x_0$ to an orthogonal basis $\{x_0,x_1,\ldots, x_N\}$ of $\bS^+$.  Then $\Psi_j$ is the degree $4p$ component of $16^m x_j \circ x_0 \in \tEnd_\bR(\bS^+) \subset \tw V^*$.
Our $\Phi$ is spanned by $\gamma_j$, the degree $4p$ component of $16^m(x_j \circ x_0 + x_0 \circ x_j)$.  Let  $\langle x\circ y , \xi \rangle$ denote the extension of the inner product on $V$ to $\tEnd_\bR(\bP) \simeq \tCl(V) \simeq \tw V^*$.  (See \cite{DH}.)  Given $\xi \in \tGr_o(4p,V)$, 
\begin{eqnarray*}
  \Psi_j(\xi) & = & 16^m \langle x_j \circ x_0 , \xi \rangle \\
  \gamma_j(\xi) & = & 16^m \langle x_j \circ x_0 + x_0 \circ x_j , 
    \xi \rangle \, .
\end{eqnarray*}
To see that $\Phi = \tspan\{\Psi_1 ,\ldots,\Psi_N\}$ it suffices to note that
\begin{displaymath}
  16^m \langle x_0 \circ x_j , \xi \rangle = \e(x_0 , \xi x_j) = 
  \e(x_j , \xi x_0 ) = 16^m \langle x_j \circ x_0 , \xi \rangle \, , 
\end{displaymath}
when $\xi \in \tw^{4p}V^*$.  Hence $\gamma_j = 2 \Psi_j$.
\end{proof}
\medskip

\noindent{\it Remark.}  Zhou showed that $C(\phi) = \tGr(\phi)$ for many well known calibrations \cite{zhou}.  As the following example illustrates, this need not be the case.

\subsection{Cartan 3-form on $\fg$.}\label{S:su2eg}

Let $G$ be a compact simple Lie group with Lie algebra $\fg$.  Set $V = \fg$ and consider the adjoint action.  Every simple Lie algebra admits an (nonzero) invariant 3-form, the Cartan form $\phi$, defined as follows.  Given $u,v \in \fg$, let $[u,v] \in \fg$ and $\langle u,v\rangle \in \bR$ denote the Lie bracket and invariant inner product, respectively.  Then $\phi(u,v,w) = c \langle u , [v,w] \rangle$, with $\frac{1}{c}$ the length of a highest root $\delta$.  It is immediate from Lemma \ref{L:cp1} that $\xi$ is a critical point if and only if $\xi$ is a subalgebra of $\fg$.  

\begin{proposition}\label{P:3subalg}
  A 3-plane $\xi$ is $\phi$-critical if and only if it is a subalgebra of $\fg$.  
\end{proposition}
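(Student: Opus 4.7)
The plan is to apply Lemma \ref{L:cp1} directly. Let $\xi = e_1 \wedge e_2 \wedge e_3 \in \tGr_o(3, \fg)$. By that lemma, $\xi$ is $\phi$-critical if and only if $(v \lefthook \phi)_{|\xi} = 0$ for every $v \in \xi^\perp$, i.e. $\phi(v, e_a, e_b) = 0$ for all $v \perp \xi$ and all $a, b \in \{1,2,3\}$.

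Next I would unwind the definition $\phi(u,v,w) = c \langle u, [v,w]\rangle$ with $c \neq 0$ to obtain
\begin{displaymath}
  \phi(v, e_a, e_b) \ = \ c \, \langle v, [e_a, e_b] \rangle \, .
\end{displaymath}
Thus $\xi$ is $\phi$-critical if and only if $\langle v, [e_a, e_b] \rangle = 0$ for every $v \in \xi^\perp$ and every pair of indices $a, b$. Since the inner product is non-degenerate, this is equivalent to saying $[e_a, e_b] \in \xi$ for all $a, b$, i.e. $\xi$ is closed under the Lie bracket and therefore a (3-dimensional) subalgebra of $\fg$.

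Both implications are now immediate: if $\xi$ is a subalgebra, each $[e_a, e_b] \in \xi$ annihilates $\xi^\perp$, so the critical point condition holds; conversely the vanishing condition forces $[e_a, e_b]$ to lie in $(\xi^\perp)^\perp = \xi$. There is no real obstacle here — the only thing to notice is that the invariance of $\langle\cdot,\cdot\rangle$ plays no role; the proof uses nothing beyond Lemma \ref{L:cp1} and the definition of $\phi$. (As the author already remarked just before the statement, the conclusion is \emph{immediate from Lemma \ref{L:cp1}}, so the write-up will be essentially the two lines above.)
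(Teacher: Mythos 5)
Your proof is correct and takes essentially the same route as the paper, which simply observes that the proposition is ``immediate from Lemma \ref{L:cp1}'': unwinding $\phi(v,e_a,e_b) = c\,\langle v,[e_a,e_b]\rangle$ for $v\in\xi^\perp$ and using non-degeneracy of the inner product is exactly that argument. (One small caveat to your parenthetical: the invariance of $\langle\cdot,\cdot\rangle$ is not entirely idle --- it is what makes $\phi(u,v,w)=c\langle u,[v,w]\rangle$ skew-symmetric in all three arguments, hence an element of $\tw^3\fg^*$ at all --- though you are right that it plays no role once $\phi$ is granted to be a $3$-form.)
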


\noindent\emph{Remark.}  The proposition generalizes to arbitrary $\phi$.  See Proposition \ref{P:prod}.

\medskip

The $\fsu(2)'s$ in $G(3,\fg)$ corresponding to a highest root all lie in the same $\tAd(G)$-orbit and Tasaki \cite{Tasaki1985} showed that this orbit is $\tGr(\phi)$.  (Thi \cite{Thi1979} had observed that the corresponding $\tSU(2)$ are volume minimizing in their homology class in the case that $G = \tSU(n)$.)  If the rank of $\fg$ is greater than 1, then $\fg$ contains 3-dimensional subalgebras that are not associated to a highest root.  Thus $\tGr(\phi) \subsetneqq C(\phi)$. 

\medskip

\noindent{\it Remark.}  The quaternionic calibration on $\mathbb{H}^n$ also satisfies $\tGr(\phi) \subsetneqq C(\phi)$; see \cite{unal} for details.

\subsection{Product version of Proposition \ref{P:cp}}\label{S:prod}

Proposition \ref{P:3subalg} asserts that a 3-plane $\xi$ is $\phi$-critical, $\phi$ the Cartan 3-form, if and only if $\xi$ is closed under the Lie bracket.  This is merely a rephrasing of Proposition \ref{P:cp}, and an analogous statement holds for any calibration.

Given a $p$-form $\phi \in \tw^p V^*$, define a $(p-1)$--fold alternating vector product $\rho$ on $V$ by 
\begin{equation} \label{E:prod}
  \phi(u,v_2,\ldots,v_p) \ =: \ \langle u , \rho(v_2,\ldots,v_p) \rangle \, .
\end{equation}

\medskip

\noindent\emph{Example.}
In the case that $V = \fg$ and $\phi$ is the Cartan 3-form, $\rho$ is a multiple of the Lie bracket.  

\medskip

\noindent
The following proposition is a reformulation of Lemma \ref{L:cp1}.

\begin{proposition} \label{P:prod}
  Let $\phi \in \tw^pV^*$, and let $\rho$ denote the associated $(p-1)$--fold 
  alternating product defined in \eqref{E:prod}.  Then a $p$-plane 
  $\xi \in \tGr_o(p,V)$ is $\phi$--critical if and only if $\xi$ is 
  $\rho$--closed. 
\end{proposition}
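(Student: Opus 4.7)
The plan is to deduce Proposition \ref{P:prod} directly from Lemma \ref{L:cp1} by unwinding the definition of $\rho$. The equivalence is essentially a translation between the annihilation condition $(v \lefthook \phi)_{|\xi} = 0$ and the orthogonality characterization of membership in $\xi$ via the nondegenerate inner product on $V$.

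First I would restate Lemma \ref{L:cp1}: $\xi$ is $\phi$--critical if and only if $\phi(v, v_2, \ldots, v_p) = 0$ for every $v \in \xi^\perp$ and every $v_2, \ldots, v_p \in \xi$ (since $(v \lefthook \phi)_{|\xi} = 0$ means precisely that the $(p-1)$--form $v \lefthook \phi$ vanishes when evaluated on any $(p-1)$--tuple drawn from $\xi$). Next I would invoke the defining relation \eqref{E:prod} to rewrite this as
\begin{equation*}
  \langle v,\ \rho(v_2, \ldots, v_p) \rangle \ = \ 0 \qquad
  \hbox{for all } v \in \xi^\perp,\ v_2, \ldots, v_p \in \xi.
\end{equation*}
Since the inner product on $V$ is nondegenerate and $(\xi^\perp)^\perp = \xi$, this condition is equivalent to $\rho(v_2, \ldots, v_p) \in \xi$ for every $v_2, \ldots, v_p \in \xi$, which is the definition of $\xi$ being $\rho$--closed.

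I do not expect any serious obstacle: once Lemma \ref{L:cp1} is in hand, the proof is two lines of definitional unwinding. The only point to be careful about is noting that $(v \lefthook \phi)_{|\xi} = 0$ for \emph{all} $v \in \xi^\perp$ (and not just for some specific complement) is what allows the passage from orthogonality against $\xi^\perp$ to actual membership in $\xi$; this uses the orthogonal decomposition $V = \xi \oplus \xi^\perp$ provided by the inner product.
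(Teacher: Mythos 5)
Your proof is correct and is exactly the paper's intended argument: the paper itself introduces Proposition \ref{P:prod} as ``a reformulation of Lemma \ref{L:cp1}'' and leaves the translation implicit, while you have correctly supplied it — passing from $(v \lefthook \phi)_{|\xi} = 0$ for all $v \in \xi^\perp$, through the defining relation \eqref{E:prod}, to $\rho(v_2,\ldots,v_p) \perp \xi^\perp$, and then using $(\xi^\perp)^\perp = \xi$ to conclude $\rho$--closedness. No gaps; your closing remark about needing \emph{all} $v \in \xi^\perp$ is precisely the right point of care.
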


\noindent\emph{Example.}  When $V = \bO$ and $\phi$ is the Cayley calibration, then $\rho$ is a multiple of the triple cross product.  See \cite[\S IV.1.C]{HL} where it is shown that a 4-plane is Cayley if and only if it is closed under the triple cross product.

\medskip

Note that  
\begin{equation}\label{E:orth}
  \rho(v_2,\ldots,v_p) \ \hbox{ is orthogonal to } \ v_2,\ldots,v_p \, . 
\end{equation}
In particular, $\rho$ may be viewed as a generalization of Gray's vector cross product, satisfying \cite[(2.1)]{MR0243469} but not necessarily \cite[(2.2)]{MR0243469}.  

Assume that $\xi = e_1\wedge\cdots\wedge e_p\in C(\phi)$.  Then \eqref{E:orth} and Proposition \ref{P:prod} imply $\rho(e_2,\ldots,e_p) = \phi(\xi) \, e_1$.  This yields the following.

\begin{corollary}\label{C:prod=0}
 Let $\xi \in \tGr_o(p,V)$.  The product $\rho$ vanishes on $[\xi] \in \tGr(p,V)$ 
 if and only if $\xi \in C(\phi)$ and $\phi(\xi)=0$.
\end{corollary}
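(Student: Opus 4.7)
The plan is to handle the two directions separately, with the reverse implication using the computation recorded immediately above the corollary and the forward implication being a direct consequence of the defining equation \eqref{E:prod} together with Proposition \ref{P:prod}.

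For the forward direction ($\Rightarrow$), I would assume $\rho(v_2,\ldots,v_p)=0$ for every choice of $v_2,\ldots,v_p\in[\xi]$. Applying \eqref{E:prod}, for any $v_1\in[\xi]$,
\[
  \phi(v_1,v_2,\ldots,v_p) \ = \ \langle v_1,\rho(v_2,\ldots,v_p)\rangle \ = \ 0.
\]
In particular, $\phi(\xi)=\phi(e_1,\ldots,e_p)=0$. Moreover, $\xi$ is trivially $\rho$--closed since $0\in[\xi]$, so Proposition \ref{P:prod} gives $\xi\in C(\phi)$.

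For the reverse direction ($\Leftarrow$), I would leverage the observation preceding the corollary: if $\xi=e_1\wedge\cdots\wedge e_p\in C(\phi)$, then $\rho(e_2,\ldots,e_p)=\phi(\xi)\,e_1$. By re-ordering the orthonormal frame of $[\xi]$ (and absorbing the sign into the alternating product $\rho$), the same argument shows that for each index $a\in\{1,\ldots,p\}$,
\[
  \rho(e_1,\ldots,\widehat{e_a},\ldots,e_p) \ = \ (-1)^{a-1}\,\phi(\xi)\,e_a.
\]
Under the hypothesis $\phi(\xi)=0$, each of these basis evaluations of $\rho$ vanishes. Since $\rho$ is $(p-1)$--fold multilinear and alternating, and since any $(p-1)$--tuple in $[\xi]$ is a linear combination of the $(p-1)$--tuples drawn from $\{e_1,\ldots,e_p\}$, we conclude that $\rho$ vanishes identically on $[\xi]$.

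I do not expect a genuine obstacle here; the only step that needs a small justification is the extension of the identity $\rho(e_2,\ldots,e_p)=\phi(\xi)\,e_1$ to all choices of $p-1$ basis vectors, which is immediate by relabeling the orthonormal basis of $[\xi]$ together with the alternating property of $\rho$.
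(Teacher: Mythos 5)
Your proof is correct and follows essentially the same route as the paper, which derives the corollary directly from the identity $\rho(e_2,\ldots,e_p)=\phi(\xi)\,e_1$ (itself a consequence of \eqref{E:orth} and Proposition \ref{P:prod}) stated immediately before it. You have simply filled in the details the paper leaves implicit---the relabeling to get $\rho(e_1,\ldots,\widehat{e_a},\ldots,e_p)=(-1)^{a-1}\phi(\xi)\,e_a$, the extension by multilinearity and alternation, and the trivial observation that identically vanishing $\rho$ makes $[\xi]$ $\rho$--closed---all of which are accurate.
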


\section{Parallel calibrations} \label{S:parallel_calib}

\subsection{Orthonormal coframes on \boldmath$M$\unboldmath} \label{S:F}

Let $V$ be an $n$-dimensional Euclidean vector space.  Let $M$ be an $n$-dimensional connected Riemannian manifold, and let $\pi : \cF \to M$ denote the \emph{bundle of orthogonal coframes}.  Given $x\in M$, the elements of the fibre $\pi^{-1}(x)$ are the linear isometries $u : T_x M \to V$.  Given $g \in \tO(V)$, the right-action $u \cdot g := g^{-1} \circ u$ makes $\cF$ a principle right $\tO(V)$--bundle.

The canonical $V$--valued 1-form $\w$ on $\cF$ is defined by 
$$
  \w_u(v) \ := \ u( \pi_* v ) \, , 
$$ 
$v \in T_u\cF$.  Let $\vartheta$ denote the unique torsion-free, $\fo(V)$--valued connection 1-form on $\cF$ (the Levi-Civita connection form).  Fix an orthonormal basis $\{ \mathsf{v}_1,\ldots,\mathsf{v}_n \}$ of $V$.  Then we may define 1-forms $\w^i$ on $\cF$ by 
$$
  \w_u \ =: \ \w_u^i \, \mathsf{v}_i \, .
$$
Let $\mathsf{v}^1,\ldots,\mathsf{v}^n$ denote the dual basis of $V^*$, and define $\vartheta^i_j$ by $\vartheta = \vartheta^i_j \, \mathsf{v}_i \ot \mathsf{v}^j$.  Then
$$
  \vartheta^i_j \, + \, \vartheta^j_i \ = \ 0 \, , \quad \mathrm{and} \quad
  \td \w^i \ = \ -\vartheta^i_j \wedge \w^j \, .
$$

Given $u \in \cF$, let $\{ e_1 , \ldots , e_n \}$, $e_i = e_i(u) := u^{-1}(\sfv_i)$, denote the corresponding orthonormal basis of $T_xM$.

\subsection{\boldmath$H$--manifolds\unboldmath} \label{S:Hmfd}

Suppose $H \subset \tO(V)$ is a Lie subgroup.  If the bundle of orthogonal coframes over $\cF \to M$ admits a sub-bundle $\cE \to M$ with fibre group $H$, then we say $M$ carries a \emph{$H$--structure}.  The $H$--structure is \emph{torsion-free} if $\cE$ is preserved under parallel transport by the Levi-Civita connection in $\cF$.  In this case we say $M$ is a \emph{$H$--manifold}.

When pulled-back to $\cE$, the forms $\w^i$ remain linearly independent, but $\vartheta$ takes values in the Lie algebra $\fh \subset \fo(V)$ of $H$.

\subsection{The construction of \boldmath$\varphi$ and $\Phi_M$\unboldmath} \label{S:varphi}

I now prove Theorem \ref{T:eds}.  Assume that $M$ is a $H$--manifold.  Let $\pi_* : T_u \cE \to T_x M$ denote the differential of $\pi:\cE\to M$.  Any $\phi \in \tw^p V^*$ induces a $p$-form $\varphi$ on $\cE$ by $\varphi_u(v_1,\ldots,v_p) = \phi( \w_u(v_1) , \ldots , \w_u(v_p) )$.  Assume $\phi$ is $H$--invariant.  Then $\varphi$ descends to a well-defined $p$-form on $M$.  Since $\cE\subset\cF$ is preserved under parallel transport, $\varphi$ is parallel and therefore closed.  Conversely, every parallel $p$-form $\varphi$ arises in such a fashion: fix $x_o \in M$, and take $V = T_{x_o}M$ and $\phi = \varphi_{x_o}$.

Assume that $\mathrm{max}_{\tGr_o(p,V)}\phi = 1$.  Then $\varphi$ is a calibration on $M$.

Since $H$ is a subgroup of the stabilizer $G$ of $\phi$, Lemma \ref{L:invariant} implies $\Phi \subset \tw^pV^*$ is a $H$--module.  It follows that $\Phi$ defines a sub-bundle $\Phi_M \subset \tw^pT^*M$.  Explicitly, given $u \in \cE_x$, $\Phi_{M,x} := (u^{-1})^*(\Phi) \subset \tw^pT^*_xM$.  The fact that $\Phi$ is an $H$--module implies that the definition of $\Phi_{M,x}$ is independent of our choice of $u \in \cE_x$.  

Let $\sP \subset \Omega^p(M)$ denote space of smooth sections of $\Phi_M$.  Theorem \ref{T:eds} now follows from Proposition \ref{P:cp}.

\medskip

\noindent\emph{Remark.}  Note that Proposition \ref{P:prod} also extends to parallel calibrations in a straightforward manner.

\subsection{Proof of Theorem \ref{T:minimal}}  \label{S:minimal}

Recall the notation of Section \ref{S:F}; in particular the framing $e = e(u)$ associated to $u \in \cF$.  Given a $p$-form $\psi \in \Omega^p(M)$, define functions $\psi_{i_1\cdots i_p} : \cF \to \bR$ by $\psi_{i_1\cdots i_p}(u) := \psi(e_{i_1},\ldots,e_{i_p})$.  The fact that $\varphi$ is parallel implies
\begin{equation} \label{E:dvarphi}
  \td \varphi_{i_1\cdots i_p} \ = \ ( \vartheta . \varphi)_{i_1\cdots i_p} \, ,
\end{equation}
where $\vartheta . \varphi$ denotes the $\fo(n)$--action of $\vartheta$ on $\varphi$. 

The following notation will be convenient.  Let $\{ i_1 , \ldots , i_m \} \subset \{1,\ldots,n\}$ and $\{a_1,\ldots,a_m\} \subset \{1,\ldots,p\}$.  If the $\{a_1,\ldots,a_m\}$ are pairwise distinct, then let $\psi^{a_1\cdots a_m}_{i_1\cdots i_m}$ denote the function obtained from $\psi_{12\cdots p}$ by replacing the indices $a_\ell$ with $i_\ell$, $1\le \ell \le m$.  Otherwise, $\psi^{a_1\cdots a_m}_{i_1\cdots i_m} = 0$.  For example, $\psi^2_s = \psi_{1s3\cdots p}$ and $\psi^{13}_{st} = \psi_{s2t4\cdots p}$.  Note that $\psi^{a_1\cdots a_m}_{i_1\cdots i_m}$ is skew-symmetric in both the upper indices and the lower indices; for example, $\psi^{abc}_{rst} = -\psi^{bac}_{rst} = -\psi^{abc}_{tsr}$.

Define
$$
  \cC \ := \ \{ u \in \cF \ | \ e_1\wedge\cdots\wedge e_p \in C(\varphi_x) \, , 
  \ x = \pi(u) \, , \ e = e(u) \} \, .
$$
It is a consequence of Lemma \ref{L:cp1} that 
$$
  \cC \ = \ \{ u \in \cF \ | \ \varphi^a_s(u) = 0 \ \forall \ 
     1\le a\le p < s \le n \} \, . 
$$

Given a $p$--dimensional submanifold $N \subset M$, a \emph{local adapted framing of $M$ on $N$} is a section $\sigma : U \to \cF$, defined on an open subset $U \subset N$ with the property that $\tspan\{ e_1(x) , \ldots , e_p(x) \} = T_xN \subset T_xM$, $e_a(x) := e_a \circ \sigma(x)$, for all $x \in U$.  When pulled-back to $\sigma(U)$, 
\begin{equation} \label{E:Neds}
  \w^s \ = \ 0 \quad \forall \ p < s \le n \quad \hbox{ and } \quad 
  \w^1\wedge\cdots\wedge\w^p \ \not= \ 0 \, .
\end{equation}
Conversely every $p$--dimensional integral submanifold $\tilde U \subset \cF$ of \eqref{E:Neds} is locally the image $\sigma(U)$ of an adapted framing over a $p$--dimensional submanifold $U \subset M$.

Given $N$, let $\cF_N \subset \cF$ denote the bundle of adapted frames of $M$ over $N$.  As noted above $\w^s{}_{|\cF_N} = 0$.  Differentiating this equation and an application of Cartan's Lemma yields 
$$
  \theta^s_a \ = \ h^s_{ab} \, \w^a
$$
for functions $h^s_{ab} = h^s_{ba} : \cF_N \to \bR$.  The $h^s_{ab}$ are the coefficients of the \emph{second fundamental form of $N \subset M$}.

Observe that $N$ is $\varphi$--critical if and only if $\cF_N \subset \cC$.  Assume that $N$ is $\varphi$--critical.  Then $\varphi^a_s = 0$ on $\cF_N$.  Differentiating this equation yields $0 = \td \varphi^a_s = (\vartheta.\varphi)^a_s = \varphi_o \, \vartheta^a_s + \varphi^{ab}_{st} \, \vartheta^t_b$, where
$$
  \varphi_o \ := \ \varphi_{12\cdots p} \ = \ \varphi(e_1,\ldots,e_p) \, 
$$
is the (constant) critical value of $\varphi$ on $N$.
Equivalently, $\varphi_o \, h^s_{ac} = \varphi^{ab}_{st} \, h^t_{bc}$.  Recalling that $\varphi^{ab}_{st}$ is skew-symmetric and $h^s_{ab}$ is symmetric in the indices $a,b$ yields $\sum_a \varphi_o \, h^s_{aa} = \varphi^{ab}_{st} \, h^t_{ab} = 0$.  If $\varphi_o \not=0$, then $\sum_a h^s_{aa} = 0$ and $N$ is a minimal submanifold of $M$.  This establishes Theorem \ref{T:minimal}.

\medskip

\noindent\emph{Remark.}  Note that a $\varphi$--critical submanifold with $\varphi_o = 0$ need not be minimal.  As an example, consider $M = \bR^n$ with the standard Euclidean metric and coordinates $x = (x^1,\ldots,x^n)$, $n \ge 4$.  The form $\varphi = \td x^1 \wedge \td x^2$ is a parallel calibration on $M$.  Any $2$--dimensional $N \subset \{ x^1 = x^2 = 0 \}$ is $\varphi$--critical with critical value $\varphi_o = 0$, but in general will not be a minimal submanifold of $\bR^n$.

\section{The system $\sP$} \label{S:I}

\subsection{The ideal \boldmath$\sI = \langle \sP \rangle$\unboldmath}
Let $\sI\subset \Omega(M)$ be the ideal (algebraically) generated by $\sP$. 

\begin{lemma*} \label{L:dI}
  The ideal $\sI$ is differentially closed.  That is, $\td \sI \subset \sI$.
\end{lemma*}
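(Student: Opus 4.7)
The plan is to reduce the statement to the sub-claim that $d\sP \subset \sI$, and then to verify this by combining two standard ingredients: the fact that $\Phi_M$ is a parallel sub-bundle of $\tw^p T^*M$ under the Levi-Civita connection, together with the torsion-free identity
\[
 d\psi \ = \ \sum_j \eta^j \wedge \nabla_{E_j}\psi
\]
valid for any local orthonormal frame $\{E_j\}$ with dual coframe $\{\eta^j\}$.

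The reduction itself is essentially formal. Since $\sI$ is the ideal algebraically generated by $\sP$, every element of $\sI$ is a finite $\mathscr{C}^\infty(M)$-linear combination of wedges $\eta \wedge \psi$ with $\psi \in \sP$, and the graded Leibniz rule gives
\[
 d(\eta \wedge \psi) \ = \ d\eta \wedge \psi \,\pm\, \eta \wedge d\psi .
\]
The first summand already lies in $\Omega(M) \wedge \sP \subset \sI$, so the task collapses to showing $d\psi \in \sI$ for each $\psi \in \sP$.

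For the main step, the key observation is that $\Phi_M$ is $\nabla$-parallel. Pulled back to $\cE$, the Levi-Civita connection form $\vartheta$ takes values in $\fh$ (this is exactly the torsion-free $H$-structure condition), and by Lemma \ref{L:invariant} the subspace $\Phi \subset \tw^pV^*$ is an $H$-submodule, hence an $\fh$-submodule. Consequently $\nabla_X \psi$ is again a section of $\Phi_M$ whenever $\psi$ is one. Combining parallelism with the torsion-free identity, on any trivializing neighbourhood $U \subset M$ one has $d\psi|_U \in \Omega^1(U) \wedge \sP|_U$. A partition of unity subordinate to a cover by such neighbourhoods, together with cutoff functions that let us extend each local section of $\Phi_M$ smoothly by zero, patches the local expressions into a finite global sum in $\Omega^1(M) \wedge \sP \subset \sI$.

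The main obstacle is not conceptual but bookkeeping: one must make precise the passage from $\Phi$ being an $H$-submodule to $\Phi_M$ being preserved by $\nabla$, and one must handle the local-to-global patching so that $d\psi$ is honestly written as a \emph{finite} sum of $\eta_i \wedge \psi_i$ with globally defined $\eta_i \in \Omega^1(M)$ and $\psi_i \in \sP$. Both are standard maneuvers for parallel sub-bundles, and once the parallelism of $\Phi_M$ is recorded the remainder of the argument is routine.
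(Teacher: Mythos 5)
Your proposal is correct and takes essentially the same route as the paper: both arguments rest on the parallelism of the sub-bundle $\Phi_M$ (coming from the $\fh$-valued connection form on $\cE$ together with the $\fh$-invariance of $\Phi$), combined with the fact that for a torsion-free connection $\td\psi$ is the skew-symmetrization of $\nabla\psi$. Your explicit Leibniz reduction to $\td\sP \subset \sI$ and the partition-of-unity patching merely spell out steps the paper leaves implicit.
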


\begin{proof}
Let $\vartheta$ be the $\fh$-valued, torsion-free connection on $M$.  Let $\{ u^1 , \ldots , u^n \}$ be a local $H$-coframe.  Note that the coefficients $\varphi_{i_1i_2\cdots i_p}$ of $\varphi$ with respect to the coframe are constant.  The space $\Phi_M$ is spanned by forms of the form $\{ \gamma = \theta.\varphi \ | \ \theta \in \fg^\perp \subset \fh^\perp \}$.  In particular, the coefficients of these spanning $\gamma$ are also constant.  Consequently the covariant derivative is $\nabla \gamma = \vartheta.\gamma$.  Since $\vartheta$ is $\fh$--valued and $\Phi$ is $\fh$--invariant, $\nabla \gamma$ may be viewed as a 1-form taking values in $\Phi_M$.  As the exterior derivative $\td \gamma$ is the skew-symmetrization of the covariant derivative $\nabla \gamma$, it follows that $\td \gamma \in \sI$.
\end{proof}

\subsection{Involutivity}
This section assumes that reader is familiar with exterior differential systems.  Excellent references are \cite{BCG3} and \cite{IL}.

In general, the exterior differential system defined by $\sI$ will fail to be involutive.  In fact, involutivity always fails when $p > \half n$.  This is seen as follows.  Let $\sI^k = \sI \, \cap \, \Omega^k(M)$.  Note that $\sI^a = \{0\}$, for $a < p$.  Let $\sV_k(\sI) \subset \tGr(k,TM)$ denote the $k$-dimensional integral elements $E$ of $\sI$.  Then, 
$$
  \sV_a(\sI) \ = \ \tGr(a,TM) \, , \ \forall \ a < p \, , \quad 
  \hbox{and} \quad
  \sV_p(\sI) \ = \ \{ [\xi] \ | \ \xi \in C(\varphi)  \} \, .
$$

Let $\sV_k(\sI)_x \subset \tGr(k,T_xM)$ denote the fibre over $x \in M$.  Given an integral element $E \in \sV_k(\sI)_x$ spanned by $\{e_1,\ldots,e_k\}\subset T_xM$, the \emph{polar space} of $E$ is 
$$
  H(E) \ := \ \{ v \in T_xM \ | \ \psi(e_1,\ldots,e_k,v) = 0 \, , 
                             \ \forall \, \psi \in \sI^{k+1} \} \ \supset \ E \, .
$$

Suppose that $E_p = [\xi] \in \sV_p(\sI)_x$ .  Let $\{ e_1 , \ldots , e_p \}$ be an orthonormal basis of $E$ and set 
$E_a = \tspan \{ e_1 , \ldots , e_a \}$, $1 \le a \le p$.  Since $\sI^a = \{0\}$, $a < p$,  we have $H(E_a) = T_xM$ and $c_a := \tcodim \, H(E_a) = 0$ for $1 \le a \le p-2$.  

Note that $0\not= v \in H(E_{p-1}) \backslash E_{p-1}$ if and only if $\{ v , e_1,\ldots,e_{p-1} \}$ spans a $\varphi$--critical plane. Proposition \ref{P:prod} implies that the span of $\{v,e_1,\ldots,e_{p-1}\}$ is closed under the product $\rho$.  Suppose that $\varphi_o = \varphi(\xi) = \varphi(e_1,\ldots,e_p) \not=0$.  Then \eqref{E:orth} implies $\rho(e_1,\ldots , e_{p-1}) = \phi(E) \, e_p \not=0$, and this forces $H(E_{p-1}) = E$.  So $c_{p-1} := \tcodim \, H(E_{p-1}) = n-p$.  Cartan's Test (cf. \cite[Theorem 7.4.1]{IL} or \cite[Theorem III.1.11]{BCG3}) implies that 
\begin{equation} \label{E:codim}
  \mathrm{codim}_E \sV_p(\sI) \ \ge \ n-p \, .
\end{equation}

Note that the Hodge dual $\ast\varphi \in \Omega^{n-p}$ is also a parallel calibration on $M$; the associated ideal is $\ast\sI$, the Hodge dual of $\sI$.  In particular $\sV_{n-p}(\ast\sI) = \{ E^\perp \ | \ E \in \sV_p(\sI) \}$, so that $\mathrm{codim}_{E^\perp} \sV_{n-p}(\ast\sI) = \mathrm{codim}_E \sV_p(\sI)$.  It follows that equality fails in \eqref{E:codim} when $p > \half n$: the system $\sI$ is not involutive.

\medskip

\noindent\emph{Remark.}  For example, $\sI$ fails to be involutive in the case that $M$ is a $G_2$--manifold and $\varphi$ is the coassociative calibration of \S\ref{S:coa}.  Here, $n=7$ and $p=4$, so that $n-p = 3$, while $\tcodim_E \sV_4(\sI) = 4$.  It fact, $\sP = \{ \alpha \wedge (\ast\varphi) \ | \ \alpha \in \Omega^1(M) \}$, where $\ast\varphi \in \Omega^3(M)$ is the associative calibration.  As is well-known, coassociative submanifolds are integral manifolds of $\{ \ast\varphi = 0 \}$, and this system is involutive.

\medskip

\noindent\emph{Remark.}  If the critical value $\varphi_o = \varphi(\xi)$ equals zero, then Corollary \ref{C:prod=0} implies that the $\rho$ vanishes on $E$.  In this case $H(E_{p-1}) = \{ v \in T_xM \ | \ \rho(v,a_1,\ldots,a_{p-2}) = 0 \ \forall \ 
\{a_1,\ldots,a_{p-2}\} \subset \{1,\ldots,p\} \}$.

\bibliography{parallel_calib.bib}
\bibliographystyle{plain}

\end{document}